\theoremstyle{plain}
    \newtheorem{thm}{Theorem}
    \newtheorem{prop}[thm]{Proposition}
    \newtheorem{cor}[thm]{Corollary}
    \newtheorem{prob}[thm]{Problem}
    \newtheorem{conj}[thm]{Conjecture}
    \newtheorem{ex}[thm]{Example}
\theoremstyle{definition}
\theoremstyle{remark}
\newcommand{\ignore}[1]{}
\title{A note on X-rays of permutations and a problem of Brualdi and Fritscher}
\author{Gustav Nordh} 
\email{gustav.nordh@gmail.com}
\begin{document}
\begin{abstract}
The subject of this note is a challenging conjecture about X-rays of permutations which is a special case of a conjecture regarding Skolem sequences. In relation to this, Brualdi and Fritscher [Linear Algebra and its Applications, 2014] posed the following problem: Determine a bijection between extremal Skolem sets and binary Hankel X-rays of permutation matrices. We give such a bijection, along with some related observations.
\end{abstract}
\maketitle
\section{Skolem sequences}
Skolem sequences originates from the work by Thoralf Skolem in 1957~\cite{Skol} on the construction of Steiner triple systems. 
Skolem proved that the set $\{1,2,\dots, 2n\}$ can be partitioned in $n$ pairs $(s_i,t_i)$ such that $t_i - s_i = i$ for $i=1,2,\dots,n$, if and only if $n \equiv 0,1 \pmod{4}$. This result can be reformulated as: There is a sequence  
with two copies of every element $k$ in 
$A=\{1,2,\dots,n\}$ such that the two copies of $k$ are placed $k$ places apart in the sequence, if and only if $n \equiv 0,1 \pmod{4}$. For example, the set $\{1,2,3,4\}$ can be used to form the sequence $42324311$, but the set $\{1,2,3\}$ cannot be used to form such a sequence. For more information on Skolem sequences and generalizations thereof, see the survey~\cite{HCD}.

A natural generalization is when the set of differences $A$ is any set or multiset of positive integers. If $A$ is a multiset, the sequences are called multi Skolem 
sequences, and the corresponding existence question
is that of deciding for which multisets 
$A=\{a_1,\dots,a_n\}$ there is a partition of $\{1,\dots,2n\}$ into
the differences in $A$. A set $A$ such that there is a partition of $\{1,\dots,2n\}$ into
the differences in $A$ is called a multi Skolem set. 
In my MSc thesis~\cite{EXJ}, I identified (rather obvious) parity and density conditions that are necessary for $A$ to be a multi Skolem set. These conditions were far from sufficient (unsurprisingly as the existence question for multi Skolem sets turns out to be NP-complete~\cite{Nordh09}). 

But, surprisingly, I discovered that when $A$ is an ordinary set (i.e., not a multiset), then these simple necessary conditions seem to be sufficient.
\begin{conj}[\cite{EXJ}]
A set $A = \{a_1,a_2,\dots,a_n\}$ with $a_1 < a_2 < \dots < a_n$ is a Skolem set if and only if the number of even $a_i$'s is even, and $\sum^n_{i=m} a_i \leq n^2-(m-1)^2$ for each 
$1 \leq m \leq n$. \label{perfconj}
\end{conj}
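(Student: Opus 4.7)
The necessity half is the easy direction and I would handle it first. Summing $s_i + t_i = 2s_i + a_i$ over $i$ and equating with $\sum_{j=1}^{2n} j = n(2n+1)$ gives $\sum_i a_i \equiv n \pmod 2$; writing $n = e + o$ with $e$ the number of even $a_i$'s and $o$ the number of odd, this rearranges to $e \equiv 0 \pmod 2$. For the density bound at index $m$, note that the $k := n-m+1$ largest differences $a_m,\dots,a_n$ contribute $k$ right-copies and $k$ left-copies to $\{1,\dots,2n\}$; maximizing $\sum t_i - \sum s_i$ by placing the right-copies in the top $k$ positions and the left-copies in the bottom $k$ yields $\sum_{i=m}^n a_i \leq k(2n-k) = n^2 - (m-1)^2$.

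For the sufficiency, which is the substance of the conjecture, I would attempt induction on $n$ via a greedy removal. The plan is to locate an element $a_j \in A$ whose two copies can be placed at a specific pair of positions so that the reduced problem on $A \setminus \{a_j\}$ is again of the same type and still satisfies both the parity and the density conditions, and then recurse. The most natural first candidate is $a_j = a_n$: the density bound at $m = n$ gives $a_n \leq 2n-1$, so one may try placing its two copies at positions $1$ and $a_n + 1$ (or, symmetrically, at $2n - a_n$ and $2n$). The parity check for the reduced set is immediate, and several of the density inequalities survive automatically.

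The hard part, and the reason the conjecture remains open, is that after removing $a_n$ the unfilled positions are not a contiguous interval but $\{2,\dots,a_n\} \cup \{a_n + 2,\dots,2n\}$, so the residual is not literally an instance of the same problem. One would therefore either have to strengthen the inductive hypothesis to accommodate one (or several) prescribed holes, in the spirit of hooked Skolem and Langford-type sequences, or try several removal candidates and show that at least one produces a smaller instance that still meets the conditions. An alternative I would pursue in parallel is to exploit the X-ray reformulation developed in the rest of this note, in which a Skolem sequence on $A$ corresponds to a $0/1$ matrix with prescribed row, column, and antidiagonal sums; here Gale--Ryser-type existence theorems and network-flow arguments are available, and the density inequalities translate cleanly into partial-sum constraints on the antidiagonal vector. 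The main obstacle in either strategy will be the extremal cases in which one or more density inequalities are tight, since slack-based arguments degenerate precisely there and will have to be supplemented by an explicit construction matching those extremal configurations.
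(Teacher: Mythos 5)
The statement you are asked to prove is Conjecture~\ref{perfconj}, which the paper presents as an \emph{open conjecture}: no proof appears anywhere in the note, and indeed the entire point of the surrounding discussion is that sufficiency is unresolved (the paper only records that the conditions are necessary, and that the special case of extremal sets, Conjecture~\ref{extconj}, is equivalent to Conjecture~\ref{conj:post} via Proposition~\ref{prop:bij}). So there is no ``paper's own proof'' to compare against, and your proposal cannot be judged as a correct proof of the statement.

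That said, the two halves of your write-up deserve different assessments. Your necessity argument is correct and is essentially the standard one: summing $s_i+t_i=2s_i+a_i$ against $\sum_{j=1}^{2n} j = n(2n+1)$ gives $\sum_i a_i\equiv n\pmod 2$ and hence that the number of even $a_i$'s is even, and bounding $\sum_{i=m}^{n}(t_i-s_i)$ by pushing the $k=n-m+1$ right-copies to the top $k$ positions and the left-copies to the bottom $k$ gives $k(2n-k)=n^2-(m-1)^2$. This matches what the paper (and \cite{EXJ}) take for granted. But the sufficiency direction --- the actual content of the conjecture --- is not proved by your text; you correctly identify the obstruction to the naive induction (removing $a_n$ leaves a non-contiguous set of free positions, so the residual instance is not of the same type, and slack-based arguments degenerate exactly at the tight density inequalities), and then you list research strategies (strengthened inductive hypotheses with prescribed holes, Gale--Ryser/flow arguments on the X-ray reformulation) without carrying any of them out. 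That is a candid and reasonable research plan, but it is a gap, not a proof: the statement remains a conjecture both in the paper and after your proposal.
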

A particularly interesting special case of Conjecture~\ref{perfconj} emerge when the set of differences $A = \{a_1,a_2,\dots,a_n\}$ is as sparse as possible (in the sense that adding $1$ to any element in $A$ force $A$ to violate the density condition), i.e., $\sum^n_{i=1} a_i = n^2$. Such sets (multisets) $A = \{a_1,a_2,\dots,a_n\}$ satisfying $\sum^n_{i=1} a_i = n^2$ are called extremal.
\begin{conj}[\cite{Nordh08}]
A set $A = \{a_1,a_2,\dots,a_n\}$ with $a_1 < a_2 < \dots < a_n$ and $\sum^n_{i=1} a_i = n^2$ is an extremal Skolem set if and only if $\sum^n_{i=m} a_i \leq n^2-(m-1)^2$ for each $1 \leq m \leq n$.
\label{extconj}
\end{conj}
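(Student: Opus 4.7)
The conjecture has two directions of very different difficulty: necessity is a one-line counting argument, while sufficiency is, as far as I know, the substantive and genuinely open content of the statement.

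For necessity, suppose $A$ is an extremal Skolem set, realised by a partition of $\{1,\dots,2n\}$ into pairs $(s_i,t_i)$ with $t_i - s_i = a_i$ and all $2n$ endpoints distinct. Fix $m$ and consider the $n-m+1$ pairs indexed by $i \geq m$. The right endpoints are distinct elements of $\{1,\dots,2n\}$, so $\sum_{i \geq m} t_i \leq (2n)+(2n-1)+\dots+(n+m)$; the left endpoints are distinct positive integers, so $\sum_{i \geq m} s_i \geq 1+2+\dots+(n-m+1)$. Subtracting gives $\sum^n_{i=m} a_i \leq (n-m+1)(n+m-1) = n^2-(m-1)^2$, as required. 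Observe that this argument never uses the ordering of the $a_i$'s or the pairing between difference and pair, only that the $2(n-m+1)$ endpoints are distinct in $\{1,\dots,2n\}$.

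For sufficiency my first instinct is induction on $n$: place the largest difference $a_n$ at positions $1$ and $a_n+1$, and reduce to a smaller instance on the remaining $2n-2$ positions. The plan is to show that the inequalities for $\{a_1,\dots,a_{n-1}\}$ survive the reduction in a suitably shifted form. This does not close immediately, because $\{1,\dots,2n\}\setminus\{1,a_n+1\}$ is not of the form $\{1,\dots,2(n-1)\}$, so one has to either allow flexible placement of $a_n$ or generalise the induction hypothesis to partitions of arbitrary finite subsets of positive integers. A cleaner alternative is to encode the existence question as a perfect matching in the bipartite graph whose left vertices are the differences $a_i$, whose right vertices are the unordered pairs in $\{1,\dots,2n\}$, and whose edges connect $a_i$ to every pair with that difference, and then attempt to verify Hall's condition directly from the density inequalities.

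The main obstacle is the gap between a single global density inequality and the combinatorially local nature of the placement problem: choices made for different differences interact strongly, and $\sum^n_{i=m} a_i \leq n^2-(m-1)^2$ does not obviously supply enough room witness-by-witness. Given the bijection promised in the abstract between extremal Skolem sets and binary Hankel X-rays of permutation matrices, the most promising route may be to translate the conjecture into that matrix-theoretic language and look for a Gale--Ryser-style characterisation of realisable X-rays; that is where I would expect the sufficiency direction to become tractable, if it is.
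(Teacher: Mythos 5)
You should first note that the statement you were asked to prove is a \emph{conjecture}: the paper contains no proof of it, and to the best of my knowledge it remains open. So there is no proof in the paper to compare yours against. Your necessity argument is correct and is essentially the standard density argument implicit in the cited literature: for any $n-m+1$ of the pairs, the right endpoints sum to at most $(n+m)+\dots+(2n)$ and the left endpoints to at least $1+\dots+(n-m+1)$, giving $(n-m+1)(n+m-1)=n^2-(m-1)^2$. Your observation that this never uses the ordering of the $a_i$'s is also right (the stated inequality for $a_m,\dots,a_n$ is just the sharpest instance). You correctly and honestly identify sufficiency as the open content, so there is no error of overclaiming; but the proposal is not a proof of the statement.

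Two concrete cautions about the routes you sketch for sufficiency. First, the bipartite-matching/Hall formulation is not faithful to the problem: you need the chosen pairs to be pairwise \emph{disjoint} as two-element subsets of $\{1,\dots,2n\}$, not merely distinct, so this is an exact-cover condition rather than a bipartite matching, and no Hall-type criterion applies directly; indeed the paper notes that the general (multiset) existence question is NP-complete, which rules out any simple marginal criterion in that generality. Second, any successful argument must exploit the fact that $A$ is a set and not a multiset: the paper exhibits the extremal multiset $\{4,4,4,8,8,8\}$, which satisfies every density inequality $\sum_{i=m}^{n}a_i\le n^2-(m-1)^2$ yet is not a multi Skolem set. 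This kills, in advance, any relaxation-style approach (LP, Gale--Ryser-type majorization, or a Hall condition counted with multiplicity) that sees only the multiset of differences and the density inequalities; the paper's own Conjecture~\ref{conj:stoctoperm} on doubly stochastic matrices is precisely an (unproven) assertion that the natural LP relaxation is tight in the binary case. Your induction-on-$n$ idea placing $a_n$ at positions $1$ and $a_n+1$ faces the same test: the inductive step must somewhere use distinctness of the remaining differences, or it would prove the false multiset statement as well.
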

Note that the parity condition in Conjecture~\ref{perfconj} is implied by $\sum^n_{i=1} a_i = n^2$, and
that the conjecture is invalid for extremal multisets. A minimal counterexample is $A = \{4,4,4,8,8,8\}$. 

\section{X-rays of permutations}
The properties of X-rays of permutations of interest in this note are primarily investigated by Bebeacua et al.~\cite{Postnikov}, and Brualdi and Fritscher~\cite{BF14}, from where we borrow most of the notation.
Let $S_n$ denote the symmetric group of all permutations of $\{1,2,\dots,n\}$ and let $[P_{\pi}]$ denote the permutation matrix of $\pi \in S_n$. 

The diagonal X-ray of $\pi$ (called the Toeplitz X-ray in~\cite{BF14}) is denoted $d(\pi) = (d_1,d_2,\dots,d_{2n-1})$ and defined by $$d_k = \sum_{i-j = n-k} [P_{\pi}]_{i,j} \;\;\;\; (k=1,2,\dots,2n-1).$$

The antidiagonal X-ray of $\pi$ (called Hankel X-ray in~\cite{BF14}) is denoted $d'(\pi) = (d'_1,d'_2,\dots,d'_{2n-1})$ and defined by $$d'_k = \sum_{i+j = k+1} [P_{\pi}]_{i,j} \;\;\;\; (k=1,2,\dots,2n-1).$$

Hence, the diagonal (antidiagonal) X-ray of $\pi$ is the number of non-empty cells in each of the $2n-1$ upper left to lower right (upper right to lower left) diagonals of the permutation matrix $[P_{\pi}]$.
The Toeplitz characteristic of $\pi$ is denoted $$l_t(\pi) = (l_1,\dots,l_n) \;\;\;\; (1 \leq l_1 \leq \dots \leq l_n \leq 2n-1)$$ and defined to be the indices of the non-zero entries in $d(\pi)=(d_1,d_2,\dots,d_{2n-1})$ arranged in non-decreasing order, where an index $i$ is repeated $k$ times if its value $d_i$ is $k$.
The Hankel characteristic of $\pi$ is denoted $$l_h(\pi) = (l'_1,\dots,l'_n) \;\;\;\; (1 \leq l'_1 \leq \dots \leq l'_n \leq 2n-1)$$ and defined to be the indices of the non-zero entries in $d'(\pi)=(d'_1,d'_2,\dots,d'_{2n-1})$ arranged in non-decreasing order, where an index $i$ is repeated $k$ times if its value $d'_i$ is $k$.
X-rays that are $(0,1)$-vectors are called binary X-rays.

A tournament is a loopless digraph
such that for every two distinct vertices $i$ and $j$ either $(i,j)$ or $(j,i)$ is an arc. The score
vector of a tournament on $n$ vertices is the vector of length $n$ whose entries are the
out-degrees of the vertices of the tournament arranged in non-decreasing order.
Bebeacua et al.~\cite{Postnikov} state the following conjecture\footnote{Despite considerable meditation on the subject, I find no clear connection between binary diagonal X-rays of permutations and score vectors of tournaments, except that they seem to be equinumerous. Perhaps the following equivalent (see~\cite{BF14}) reformulation of Conjecture~\ref{conj:post} is more informative: $(a_1, a_2, \dots, a_{n})$ with $0 < a_1 < a_2 < \dots < a_n < 2n$ 
is the Toeplitz characteristic $l_t(\pi)$ of a $\pi \in S_n$, if and only if $\sum^k_{i=1} a_i \geq k^2$ for each $1 \leq k \leq n$, with equality holding for $k=n$.}.
\begin{conj}[\cite{Postnikov}] \label{conj:post}
The number of binary diagonal X-rays of $n \times n$ permutation matrices equals the number of score vectors of tournaments of order $n$.
\end{conj}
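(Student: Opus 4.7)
The plan is to reduce Conjecture~\ref{conj:post} to the equivalent reformulation indicated in the footnote and then pair that reformulation with Landau's classical theorem on tournament score sequences. Landau's theorem characterises score sequences $(s_1,\dots,s_n)$ with $0\leq s_1\leq\dots\leq s_n\leq n-1$ by the inequalities $\sum_{i=1}^k s_i\geq\binom{k}{2}$ for each $k$, with equality at $k=n$; the reformulation characterises binary Toeplitz characteristics by the analogous inequalities $\sum_{i=1}^k a_i\geq k^2$, again with equality at $k=n$. The shift $(s_1,\dots,s_n)\mapsto(s_1+1,s_2+2,\dots,s_n+n)$ is then a bijection between the two families: weakly monotone $s_i\in[0,n-1]$ correspond to strictly monotone $a_i\in[1,2n-1]$, and $\sum_{i=1}^k a_i=\sum_{i=1}^k s_i+\binom{k+1}{2}$ turns the Landau inequality into $\sum_{i=1}^k a_i\geq\binom{k}{2}+\binom{k+1}{2}=k^2$, preserving the equality case at $k=n$. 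Thus the entire content of the conjecture is concentrated in the reformulation.

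The necessity direction of the reformulation is a short calculation: if $\{\pi(i)-i+n:i\in[n]\}=\{a_1,\dots,a_n\}$ and $I$ is the set of $k$ rows on which the smallest diagonals $a_1,\dots,a_k$ sit, then $\sum_{i=1}^k a_i=kn+\sum_{i\in I}\pi(i)-\sum_{i\in I}i$, and bounding $\sum_{i\in I}\pi(i)\geq 1+2+\dots+k$ and $\sum_{i\in I}i\leq(n-k+1)+\dots+n$ yields exactly $\sum_{i=1}^k a_i\geq k^2$. For sufficiency -- the conjecture's real content -- I would view a realising $\pi$ as a $0/1$-matrix with all row-sums, all column-sums, and the prescribed diagonal-sums equal to $1$, and pursue either a network-flow / max-flow-min-cut argument, or an inductive construction that places one entry on an extremal diagonal and descends to a smaller problem while preserving the prefix inequalities.

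The main obstacle is that the hypotheses $\sum_{i=1}^k a_i\geq k^2$ are one-sided prefix conditions in the diagonal ordering, whereas existence of a permutation matrix with prescribed row, column, and diagonal sums is a three-dimensional transportation problem whose feasibility is not immediately captured by a Hall-type condition on any single pair of axes. Bridging this gap is where the difficulty lies: one must show that prefix dominance in the diagonal indices alone enforces the compatibility of the row and column constraints. I would expect this to require a careful exchange argument -- for instance, showing that if the extremal diagonal $a_n$ cannot be populated without violating some later inequality then an equally extremal alternative can be substituted -- but controlling the inductive descent so that none of the $n$ prefix inequalities is broken, along both the upper and lower ends simultaneously, appears to be precisely the combinatorial obstruction that has kept this conjecture open.
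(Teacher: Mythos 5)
This statement is labelled a conjecture in the paper, and the paper offers no proof of it: it is an open problem, attributed to Bebeacua et al., and the author's own contribution elsewhere in the note is only a bijection between two other conjecturally-characterised families, not a resolution of this one. Your proposal, to its credit, is honest about this. The first half of what you write is correct and matches the paper's framing: the footnote's reformulation (that $(a_1,\dots,a_n)$ with $0<a_1<\dots<a_n<2n$ is a Toeplitz characteristic of some $\pi\in S_n$ iff $\sum_{i=1}^k a_i\geq k^2$ with equality at $k=n$) is equivalent to the equinumerosity statement via exactly the Landau correspondence $a_i=s_i+i$ that you describe, and your verification that $\binom{k}{2}+\binom{k+1}{2}=k^2$ and that weak monotonicity converts to strict monotonicity is right. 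Your necessity computation for the prefix inequalities is also correct and is the standard one (it appears in the cited references).

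The gap is that the sufficiency direction of the reformulation is the \emph{entire} content of the conjecture, and your proposal does not prove it; it only names two candidate strategies (a flow argument, an inductive placement argument) and then explains why each is obstructed. That is a research plan, not a proof. Two concrete warnings about the strategies you name: first, the paper points out that the recognition problem for \emph{non-binary} Toeplitz X-rays of permutations is NP-complete, so any max-flow/min-cut or LP-feasibility argument must exploit binarity in an essential structural way --- a generic transportation-polytope argument cannot work, since the diagonal constraints are a third axis not captured by bipartite flow feasibility (as you yourself observe). Second, the paper's Conjectures~\ref{conj:stoc} and~\ref{conj:stoctoperm} show that even the doubly-stochastic relaxation (which \emph{is} an LP and hence decidable in polynomial time) is not known to round to a permutation with the same binary X-ray; that rounding step is precisely the missing ingredient in the flow approach. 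So the proposal correctly reduces the problem but does not close it, and no proof exists in the paper to compare against.
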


\section{A problem of Brualdi and Fritscher}
Brualdi and Fritscher~\cite{BF14} posed the following problem:
\enquote{Determine a bijection between perfect extremal Skolem sets of size $n$ and binary Hankel X-rays of $n \times n$ permutation matrices.}
In a footnote they state:
\enquote{In~\cite{Postnikov,Nordh08} it is stated that the two conjectures are equivalent but we fail to see any relation between the two except that both involve $n$ positive integers satisfying the same conditions.}
The two conjectures referred to in the above quote are Conjectures~\ref{extconj} and~\ref{conj:post} in this note.
In fact, a bijection between extremal Skolem sets and binary X-rays, that Brualdi and Fritscher asks for, is implicit in~\cite{Nordh08}. We proceed to give this bijection explicitly, along with some further observations. First notice that what is called a perfect extremal Skolem set in~\cite{BF14,Nordh08} is what is called a Skolem set in this note. Moreover, the bijection is stated for diagonal (Toeplitz) X-rays instead of antidiagonal (Hankel) X-rays. As there is an obvious bijection between diagonal and antidiagonal X-rays of permutations (by reversing all rows of the permutation matrix), this is equivalent.

\begin{prop}\label{prop:bij}
There is a bijection between extremal Skolem sets of size $n$ and binary diagonal (Toeplitz) X-rays of $n \times n$ permutation matrices. More explicitly, $A = \{a_1, a_2, \dots, a_{n}\}$ is an extremal Skolem set if and only if there is a $\pi \in S_n$ such that the non-zero components of $d(\pi) = (d_1,d_2,\dots,d_{2n-1})$ are $\{d_{a_1},d_{a_2},\dots,d_{a_n}\}$.
\end{prop}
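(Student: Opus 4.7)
The plan is to view each $n \times n$ permutation matrix as a partition of $\{1,2,\dots,2n\}$ via a relabelling of columns. Specifically, identify row $i$ with the integer $i$ and column $j$ with the integer $n+j$; then the support entry $(i,\pi(i))$ of $[P_\pi]$ corresponds to the pair $\{i,\,n+\pi(i)\}$, and the resulting $n$ pairs partition $\{1,\dots,2n\}$ because rows consume each element of $\{1,\dots,n\}$ exactly once and columns consume each element of $\{n+1,\dots,2n\}$ exactly once. The crucial bookkeeping step is the observation that the difference $(n+\pi(i))-i$ equals the diagonal index $k$ on which $(i,\pi(i))$ lies in the definition $d_k=\sum_{i-j=n-k}[P_\pi]_{i,j}$, since $i-\pi(i)=n-k$ rearranges to $k=n+\pi(i)-i$.

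For the forward direction, suppose $\pi\in S_n$ has binary $d(\pi)$ with non-zero indices $A=\{a_1,\dots,a_n\}$. The $n$ pairs $\{i,\,n+\pi(i)\}$ then partition $\{1,\dots,2n\}$ and their differences are precisely $a_1,\dots,a_n$; the fact that these differences are pairwise distinct is equivalent to the X-ray being binary. Summing, $\sum a_i=\sum\bigl((n+\pi(i))-i\bigr)=n^2$, so $A$ is an extremal Skolem set.

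For the converse, let $A=\{a_1,\dots,a_n\}$ be extremal Skolem and fix any witnessing partition of $\{1,\dots,2n\}$ into pairs $(s_i,t_i)$ with $t_i>s_i$ and $t_i-s_i=a_i$. The essential observation is that $\sum s_i+\sum t_i=n(2n+1)$, while extremality gives $\sum t_i-\sum s_i=\sum a_i=n^2$; adding and dividing yields $\sum s_i=n(n+1)/2$. Since this is the \emph{minimum} possible sum of $n$ distinct elements of $\{1,\dots,2n\}$, we are forced to have $\{s_1,\dots,s_n\}=\{1,\dots,n\}$ and hence $\{t_1,\dots,t_n\}=\{n+1,\dots,2n\}$. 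Defining $\pi\in S_n$ by $\pi(s_i)=t_i-n$ makes the partition coincide with the pairs $\{i,\,n+\pi(i)\}$ of the forward construction, so $d(\pi)$ is binary with non-zero indices exactly $\{a_1,\dots,a_n\}$.

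The argument is almost entirely a translation of definitions once the right encoding is set up; I do not expect a real obstacle. The one point deserving attention is the \emph{minimum-sum} remark that extremality forces the smaller halves of the Skolem pairs to be precisely $\{1,\dots,n\}$, because without it the induced map from a Skolem partition to a permutation would not be well defined. Everything else, including checking that the map and its inverse recover each other, is routine.
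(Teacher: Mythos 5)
Your proof is correct and follows essentially the same route as the paper: both encode the Skolem pair $(s_i,t_i)$ as the matrix entry $(s_i,\,t_i-n)$ and observe that the diagonal index of that entry is exactly the difference $t_i-s_i$. The only difference is that you explicitly supply the minimum-sum argument (extremality forces $\sum s_i=n(n+1)/2$, hence $\{s_1,\dots,s_n\}=\{1,\dots,n\}$) to justify the step the paper merely asserts, namely that each pair of an extremal partition must straddle $\{1,\dots,n\}$ and $\{n+1,\dots,2n\}$.
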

\begin{proof}
$A = \{a_1, a_2, \dots, a_{n}\}$ is an
extremal Skolem set if and only if the set $\{1,2,\dots, 2n\}$ can be partitioned into the differences in $A$. Since
$A$ is extremal, this partition $a_i = t_i - s_i$, ($i = 1,2,\dots,n$) has the property that 
$s_i \in \{1,2,\dots,n\}$ and $t_i \in \{n+1,n+2,\dots,2n\}$, ($i = 1,2,\dots,n$). Hence, the permutation $\pi$ on 
$\{1,2,\dots,n\}$ defined as 
$\pi(s_i) = t_i - n$, ($i = 1,2,\dots,n$) has binary diagonal X-ray $d(\pi) = (d_1,d_2,\dots,d_{2n-1})$, 
with $d_i = 1$ if and only if $i \in \{t_1 - s_1, t_2 - s_2, \dots, t_n - s_n\} = \{a_1, a_2, \dots, a_{n}\}$.
\end{proof}
\begin{ex} Consider the permutation $\pi \in S_4$ given by $\pi(1) = 3, \pi(2) = 2, \pi(3) = 4, \pi(4) = 1$ represented by the $4 x 4$ upper right submatrix of the matrix below. The diagonal X-ray $d(\pi) = (d_1,d_2,d_3,d_4,d_5,d_6,d_7) = (1,0,0,1,1,1,0)$ with $d_i = 1$ if and only if $i \in \{1,4,5,6\}$. The permutation $\pi$ defines a partition of $\{1,2,3,4,5,6,7,8\}$ into pairs $(t_i,s_i)$ by $\pi(s_i) = t_i - n$, with corresponding set of differences $A = \{t_i - s_i\}$ ($i = 1,2,3,4$) (in this example $s_1=4, s_2 = 2, s_3 = 3, s_4 = 1$, and hence, the differences are $A=\{\pi(4) + 4 - 4,\pi(2) + 4 - 2, \pi(3) + 4 - 3, \pi(1) + 4 - 1\} = \{1,4,5,6\}$ with corresponding Skolem sequence (6,4,5,1,1,4,6,5)).
$$\left[ \begin{array}{cccc|cccc} s_4 & 0 & 0 & 0 & 0 & 0 & t_4 & 0\\ 0 & s_2 & 0 & 0 & 0 & t_2 & 0 & 0\\ 0 & 0 & s_3 & 0 & 0 & 0 & 0 & t_3\\ 0 & 0 & 0 & s_1 & t_1 & 0 & 0 & 0\\ \hline 6 & 4 & 5 & 1 & 1 & 4 & 6 & 5\end{array} \right]$$
\end{ex}

Note that the statement in Proposition~\ref{prop:bij} that the non-zero components of the diagonal X-ray $d(\pi) = (d_1,d_2,\dots,d_{2n-1})$ are $\{d_{a_1},d_{a_2},\dots,d_{a_n}\}$ is equivalent to the statement $l_t(\pi) = (a_1,\dots,a_n)$ where $l_t(\pi)$ is the Toeplitz characteristic of $\pi$. Thus, the result in Proposition~\ref{prop:bij} can be stated as:
\begin{cor}
$A = \{a_1, a_2, \dots, a_{n}\}$ is an extremal Skolem set if and only if there is a $\pi \in S_n$ with Toeplitz characteristic $l_t(\pi) = (a_1, a_2, \dots, a_{n})$.
\end{cor}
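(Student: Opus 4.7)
The plan is to derive the corollary as an essentially definitional reformulation of Proposition~\ref{prop:bij}. The key observation is the link between the binary / set conditions (no multiplicities) on both sides of the equivalence: the $a_i$ being pairwise distinct matches exactly the $d_{a_i}$ all equaling $1$, which in turn matches the entries of $l_t(\pi)$ being pairwise distinct.

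First I would spell out what it means for $l_t(\pi)$ to equal a strictly increasing tuple $(a_1,\dots,a_n)$. By definition of the Toeplitz characteristic, the index $i$ is repeated $d_i$ times in $l_t(\pi)$. Hence $l_t(\pi) = (a_1,\dots,a_n)$ with $a_1 < a_2 < \dots < a_n$ is equivalent to saying that $d_i \in \{0,1\}$ for all $i$ (i.e., $d(\pi)$ is binary) and that the nonzero positions of $d(\pi)$ are exactly $a_1,\dots,a_n$. Using $\sum_{i=1}^{2n-1} d_i = n$, a binary $d(\pi)$ with $n$ nonzero positions has value $1$ at precisely those positions, so this is the same as saying that the nonzero components of $d(\pi)$ are $\{d_{a_1},\dots,d_{a_n}\}$, which is the formulation used in Proposition~\ref{prop:bij}.

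For the forward direction, suppose $A = \{a_1,\dots,a_n\}$ is an extremal Skolem set. Since $A$ is a set, the $a_i$ are pairwise distinct, so after sorting we have $a_1 < a_2 < \dots < a_n$. Proposition~\ref{prop:bij} produces $\pi \in S_n$ whose diagonal X-ray has nonzero components precisely $\{d_{a_1},\dots,d_{a_n}\}$; by the reformulation above, $l_t(\pi) = (a_1,\dots,a_n)$. Conversely, if $\pi \in S_n$ satisfies $l_t(\pi) = (a_1,\dots,a_n)$ with $a_1 < \dots < a_n$, then $d(\pi)$ is binary with nonzero entries exactly at $a_1,\dots,a_n$, so Proposition~\ref{prop:bij} applied in the other direction gives that $A = \{a_1,\dots,a_n\}$ is an extremal Skolem set.

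There is really no serious obstacle here; the only point that deserves explicit attention is the slight bookkeeping of how repeated indices in $l_t(\pi)$ correspond to values $d_i \geq 2$, and hence why the hypothesis that $A$ is a set (rather than a multiset) is exactly what forces $d(\pi)$ to be binary. Once this is noted, the corollary is just Proposition~\ref{prop:bij} rewritten in the language of the Toeplitz characteristic.
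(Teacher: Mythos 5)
Your proof is correct and takes essentially the same route as the paper, which simply observes that the corollary is Proposition~\ref{prop:bij} restated in the language of the Toeplitz characteristic; your write-up merely makes explicit the bookkeeping (distinct $a_i$'s, $\sum_i d_i = n$ forcing a binary X-ray) that the paper leaves implicit.
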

Also note that the proof of Proposition~\ref{prop:bij} hold with minor modifications for extremal multi Skolem sets and non-binary Toeplitz X-rays (for the details, see~\cite{Nordh08}).
\begin{cor}
$A = \{a_1, a_2, \dots, a_{n}\}$ is an extremal multi Skolem set if and only if there is a $\pi \in S_n$ with Toeplitz characteristic $l_t(\pi) = (a_1, a_2, \dots, a_{n})$.
\end{cor}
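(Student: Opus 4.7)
The plan is simply to rerun the proof of Proposition~\ref{prop:bij}, observing that nothing in that argument actually required the $a_i$'s to be pairwise distinct. I would begin by reproving the key numerical fact, which is unchanged in the multiset setting: if $A=\{a_1,\ldots,a_n\}$ is extremal and $a_i=t_i-s_i$ partitions $\{1,\ldots,2n\}$, then $\sum t_i + \sum s_i = n(2n+1)$ while $\sum t_i - \sum s_i = \sum a_i = n^2$, so $\sum s_i = n(n+1)/2$. Since this is the minimum possible sum of $n$ distinct elements of $\{1,\ldots,2n\}$, the only way to achieve it is $\{s_1,\ldots,s_n\}=\{1,\ldots,n\}$ and consequently $\{t_1,\ldots,t_n\}=\{n+1,\ldots,2n\}$. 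This counting argument is oblivious to whether $A$ is a set or a multiset.

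For the forward direction, I would then define $\pi(s_i)=t_i-n$ exactly as before. Since the $s_i$ are distinct and lie in $\{1,\ldots,n\}$, and the $t_i-n$ are distinct and lie in $\{1,\ldots,n\}$, this yields a well-defined $\pi\in S_n$ even when several $a_i$'s coincide. The diagonal X-ray entry $d_k$ counts the number of $i$ with $t_i-s_i=k$, which is exactly the multiplicity of $k$ in the multiset $A$. Hence the multiset of nonzero-index repetitions in $d(\pi)$ equals $A$, which is precisely the statement $l_t(\pi)=(a_1,\ldots,a_n)$.

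For the converse, given $\pi\in S_n$ with Toeplitz characteristic $(a_1,\ldots,a_n)$, I would set $s_i=i$ and $t_i=\pi(i)+n$, giving a partition of $\{1,\ldots,2n\}$ into the $n$ pairs $(s_i,t_i)$, and observe that the multiset of differences $\{t_i-s_i\}=\{\pi(i)+n-i\}$ is by definition $l_t(\pi)=A$. This exhibits $A$ as an extremal multi Skolem set.

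There is essentially no main obstacle: the only thing to check carefully is that multiplicities on both sides match, i.e.\ that an index $k$ appearing with multiplicity $m$ in $l_t(\pi)$ corresponds to $m$ pairs with difference $k$ in the Skolem partition, which is built into the definitions. The argument is, as the paper notes, the \emph{same} proof as Proposition~\ref{prop:bij} with the word ``set'' replaced by ``multiset'' throughout.
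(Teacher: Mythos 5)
Your proof is correct and follows exactly the route the paper intends: the paper's own justification for this corollary is simply the remark that the proof of Proposition~\ref{prop:bij} goes through with minor modifications in the multiset setting, and your write-up fills in precisely those details (in particular the counting argument forcing $\{s_1,\dots,s_n\}=\{1,\dots,n\}$, which the paper asserts without proof). Nothing in your argument deviates from the paper's approach; it is just a more explicit version of it.
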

The proof of Proposition~\ref{prop:bij} also give an obvious bijection between extremal (multi) Skolem sequences and permutations for each pair of corresponding extremal (multi) Skolem sets and Toeplitz characteristics.  
\begin{cor}
The number of extremal (multi) Skolem sequences that can be generated from $A = \{a_1, a_2, \dots, a_{n}\}$ equals
$|\{\pi \in S_n \mid l_t(\pi) = (a_1, a_2, \dots, a_{n})\}|$. 
\end{cor}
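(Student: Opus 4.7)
The plan is to upgrade the object-level bijection underlying Proposition~\ref{prop:bij} from sets to sequences/permutations, so that the Corollary follows by equating cardinalities of the two sides of the bijection.

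First I would recall that, under the correspondence established in the proof of Proposition~\ref{prop:bij}, an extremal Skolem sequence generated from $A = \{a_1,\dots,a_n\}$ is the same data as a partition of $\{1,2,\dots,2n\}$ into pairs $(s_i,t_i)$ with $t_i - s_i = a_i$, and (because $A$ is extremal) with $s_i \in \{1,\dots,n\}$ and $t_i \in \{n+1,\dots,2n\}$. So it suffices to produce a bijection between such partitions and $\{\pi \in S_n \mid l_t(\pi) = (a_1,\dots,a_n)\}$.

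Next I would exhibit the two maps. Forward: given a partition $\{(s_i,t_i)\}_{i=1}^n$ as above, define $\pi \in S_n$ by $\pi(s_i) = t_i - n$; since $\{s_i\} = \{1,\dots,n\}$ and $\{t_i - n\} = \{1,\dots,n\}$, this is a well-defined permutation, and by construction the diagonal of $[P_\pi]$ on which the entry at position $(s_i,\pi(s_i))$ lies is indexed by $\pi(s_i) + n - s_i = t_i - s_i = a_i$, so $l_t(\pi) = (a_1,\dots,a_n)$. Backward: given $\pi \in S_n$ with $l_t(\pi) = (a_1,\dots,a_n)$, for each $i \in \{1,\dots,n\}$ there is a unique non-zero entry of $[P_\pi]$ on the diagonal labelled $a_i$, say at position $(s_i,c_i)$, and we set $t_i = c_i + n$; this produces pairs $(s_i,t_i)$ with $s_i \in \{1,\dots,n\}$, $t_i \in \{n+1,\dots,2n\}$, $t_i - s_i = a_i$, and the $s_i$'s (respectively the $t_i$'s) exhaust $\{1,\dots,n\}$ (respectively $\{n+1,\dots,2n\}$) because $\pi$ is a permutation.

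Finally I would verify that the two maps are mutual inverses — this is immediate from the identities $\pi(s_i) = t_i - n$ and $t_i = \pi(s_i) + n$ — and conclude that the cardinalities of the two sides agree, giving the Corollary. There is no real obstacle here; the only thing one must be careful about is that the indexing $i \mapsto a_i$ is consistent on both sides, which is ensured by the convention that $l_t(\pi)$ is the non-decreasing listing of the non-zero diagonal indices with multiplicity, so the multiset equality $\{a_1,\dots,a_n\} = \{t_i - s_i\}$ persists in the multi Skolem case as well.
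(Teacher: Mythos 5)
Your proposal is correct and follows exactly the route the paper intends: the paper offers no separate proof of this corollary, merely asserting that the bijection is implicit in the construction $\pi(s_i)=t_i-n$ from the proof of Proposition~\ref{prop:bij}, and your write-up spells out precisely that map and its inverse. The only detail worth keeping in mind is that in the multi case a \enquote{Skolem sequence} should be understood as carrying its pairing of positions (equivalently, the partition of $\{1,\dots,2n\}$ itself), which is the convention your argument implicitly and correctly adopts.
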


Recall that a fixed-point-free involution $\pi \in S_{2n}$ is a permutation such that $[P_{\pi}]$ is symmetric and have $0$ trace (i.e., empty main diagonal). The following bijection between (not necessarily extremal) Skolem sets and binary Toeplitz X-rays of fixed-point-free involutions also appear implicitly in~\cite{Nordh08}. 
\begin{prop}\label{prop:nonext}
There is a bijection between Skolem sets of size $n$ and binary diagonal (Toeplitz) X-rays of fixed-point-free involutions in $S_{2n}$. More explicitly, $A = \{a_1,\dots,a_n\}$ is a Skolem set if and only if there is a fixed-point-free involution $\pi \in S_{2n}$ with Toeplitz characteristic $l_t(\pi) = (2n-a_n,2n-a_{n-1},\dots,2n-a_1,2n+a_1,2n+a_2,\dots,2n+a_n)$.
\end{prop}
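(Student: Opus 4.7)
The plan is to mirror the proof of Proposition~\ref{prop:bij}, exploiting the one-to-one correspondence between fixed-point-free involutions $\pi\in S_{2n}$ and partitions of $\{1,2,\dots,2n\}$ into $n$ unordered pairs. Every fixed-point-free involution decomposes into $n$ disjoint $2$-cycles $\{s_i,t_i\}$ with $s_i<t_i$; these pairs partition $\{1,\dots,2n\}$, and conversely every such partition determines an involution. Each $2$-cycle contributes the two symmetric non-zero entries $[P_\pi]_{s_i,t_i}$ and $[P_\pi]_{t_i,s_i}$, and the fixed-point-free condition ensures the main diagonal is empty.

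Next I would locate these entries on the Toeplitz diagonals of the $2n\times 2n$ matrix. Using $d_k=\sum_{i-j=2n-k}[P_\pi]_{i,j}$, the entry $(s_i,t_i)$ with $s_i<t_i$ lies on diagonal $k=2n+(t_i-s_i)$, while its mirror $(t_i,s_i)$ lies on diagonal $k=2n-(t_i-s_i)$. Writing $a_i:=t_i-s_i\in\{1,\dots,2n-1\}$, the $2$-cycle $\{s_i,t_i\}$ therefore contributes $+1$ to both $d_{2n-a_i}$ and $d_{2n+a_i}$, and the non-zero diagonals of $d(\pi)$ sit symmetrically about $k=2n$ at the positions $\{2n-a_i\}\cup\{2n+a_i\}$. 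The binary condition on the X-ray is then exactly the condition that no difference value $a_i$ is attained by two distinct $2$-cycles, i.e.\ that the multiset of differences $\{a_1,\dots,a_n\}$ is a set.

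Combining these observations, if $A=\{a_1,\dots,a_n\}$ (with $a_1<\dots<a_n$) is a Skolem set with associated partition $\{(s_i,t_i)\}$ of $\{1,\dots,2n\}$ satisfying $t_i-s_i=a_i$, the involution with $2$-cycles $\{s_i,t_i\}$ is a fixed-point-free element of $S_{2n}$ whose binary Toeplitz X-ray has non-zero entries precisely at the indices $2n-a_n<\dots<2n-a_1<2n+a_1<\dots<2n+a_n$, which is the claimed Toeplitz characteristic. Conversely, from any fixed-point-free involution with binary X-ray of this form one reads off the $2$-cycles as a partition of $\{1,\dots,2n\}$ into pairs with distinct differences $a_1,\dots,a_n$, exhibiting $A$ as a Skolem set. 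I do not expect a genuine obstacle; the only point requiring care is the bookkeeping of indices between the $4n-1$ diagonals of a $2n\times 2n$ matrix and the $n$-element set $A$, together with tracking the symmetric pairing of above-diagonal and below-diagonal entries forced by $[P_\pi]$ being symmetric.
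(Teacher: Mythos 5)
Your proposal is correct and follows essentially the same route as the paper: identify a Skolem set's partition of $\{1,\dots,2n\}$ into pairs $(s_i,t_i)$ with $t_i-s_i=a_i$ with the fixed-point-free involution whose $2$-cycles are those pairs, and read off the Toeplitz characteristic from the symmetric placement of the entries $(s_i,t_i)$ and $(t_i,s_i)$ on the diagonals $2n+a_i$ and $2n-a_i$. The paper's proof states this correspondence in one sentence; your version simply makes explicit the diagonal-index bookkeeping and the observation that distinctness of the differences is what makes the X-ray binary.
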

\begin{proof}
$A = \{a_1, a_2, \dots, a_{n}\}$ is a
Skolem set if and only if the set $\{1,2,\dots, 2n\}$ can be partitioned into the differences in $A$, i.e., $a_i = t_i - s_i$, ($i = 1,2,\dots,n$), if and only if there is a fixed-point-free involution $\pi \in S_{2n}$ defined as $\pi(s_i) = t_i$ and $\pi(t_i) = s_i$, with Toeplitz characteristic $l_t(\pi) = (2n-a_n,2n-a_{n-1},\dots,2n-a_1,2n+a_1,2n+a_2,\dots,2n+a_n)$.
\end{proof}

\begin{ex}
Consider the fixed-point-free involution $\pi \in S_6$ given by $\pi(1) = 5, \pi(2) = 3, \pi(3) = 2, \pi(4) = 6, \pi(5) = 1, \pi(6) = 4$. The Toeplitz characterstic of $\pi$ is $l_t(\pi) = (2,4,5,7,8,10) = (6-4,6-2,6-1,6+1,6+2,6+4)$. The fixed-point-free involution $\pi$ defines a partition of $\{1,2,3,4,5,6\}$ into pairs $(t_i,s_i)$ by $\pi(s_i) = t_i$ and $\pi(t_i) = s_i$, with corresponding set of differences $A = \{t_i - s_i\}$ ($i = 1,2,3$) (in this example $s_1=2, s_2 = 4, s_3 = 1$, and hence, the differences are $A=\{\pi(2) - 2,\pi(4) - 4, \pi(1) - 1\} = \{1,2,4\}$ with corresponding Skolem sequence (4,1,1,2,4,2)).
$$\left[ \begin{array}{cccccc} 0 & 0 & 0 & 0 & t_3 & 0 \\ 0 & 0 & t_1 & 0 & 0 & 0\\ 0 & s_1 & 0 & 0 & 0 & 0\\ 0 & 0 & 0 & 0 & 0 & t_2 \\ s_3 & 0 & 0 & 0 & 0 & 0\\0 & 0 & 0 & s_2 & 0 & 0\\ \hline 4 & 1 & 1 & 2 & 4 & 2\end{array} \right]$$
\end{ex}

By Proposition~\ref{prop:nonext}, the following conjecture about binary Toeplitz X-rays of fixed-point-free involutions is equivalent to the main conjecture regarding Skolem sets (i.e., Conjecture~\ref{perfconj}).
\begin{conj}
A sequence of $2n$ positive integers $(a_1, a_2, \dots, a_{2n})$ with $a_1 < a_2 < \dots < a_{2n}$ and $a_{2n+1-i}-a_i = 2a_i$ ($i = 1,\dots,n$) is the Toeplitz characteristic $l_t(\pi)$ of a fixed-point-free involution $\pi \in S_{2n}$ 
if and only if $\sum^k_{i=1} a_i \geq k^2$ for each $1 \leq k \leq n$, 
and the number of even elements in $\{a_1,a_2,\dots,a_n\}$ is even.
\end{conj}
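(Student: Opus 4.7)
The plan is to deduce the conjecture from Conjecture~\ref{perfconj} via the explicit bijection of Proposition~\ref{prop:nonext}. By that proposition, a sequence $(a_1,\dots,a_{2n})$ is the Toeplitz characteristic of a fixed-point-free involution in $S_{2n}$ if and only if it has the symmetric form
\[(2n-c_n,\,2n-c_{n-1},\,\dots,\,2n-c_1,\,2n+c_1,\,\dots,\,2n+c_n)\]
for some Skolem set $C=\{c_1<c_2<\dots<c_n\}\subseteq\{1,\dots,2n-1\}$. First I would verify that every sequence $(a_1,\dots,a_{2n})$ satisfying the stated symmetry and monotonicity hypotheses arises in this way, by setting $c_i := a_{n+i} - 2n$: strict monotonicity together with the symmetry forces $c_1<c_2<\dots<c_n$ to be distinct positive integers bounded by $2n-1$, so $C$ is a legitimate candidate for a Skolem set and the entire sequence $(a_1,\dots,a_{2n})$ is recovered from $C$.

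Next I would translate the two hypotheses in the displayed conjecture into the two conditions of Conjecture~\ref{perfconj}. Since $a_i = 2n - c_{n+1-i}$ for $1\le i\le n$, the parities of $a_1,\dots,a_n$ coincide with the parities of $c_1,\dots,c_n$, so the parity condition is immediate. For the density condition a short calculation gives
\[\sum_{i=1}^{k} a_i \;=\; 2nk - \sum_{j=n-k+1}^{n} c_j \qquad (1\le k\le n),\]
so $\sum_{i=1}^{k} a_i \ge k^2$ is equivalent to $\sum_{j=n-k+1}^n c_j \le 2nk - k^2$. The substitution $m := n-k+1$ turns $2nk - k^2$ into $n^2 - (m-1)^2$, so the density hypothesis $\sum_{i=1}^k a_i\ge k^2$ for $1\le k\le n$ is exactly the Skolem density bound $\sum_{j=m}^n c_j \le n^2 - (m-1)^2$ for $1\le m\le n$.

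Combining these translations with Proposition~\ref{prop:nonext} yields a term-by-term equivalence between the biconditional in the stated conjecture and the biconditional of Conjecture~\ref{perfconj} applied to $C$, establishing the claimed equivalence. The work is essentially bookkeeping, and the only real obstacle is checking that the symmetry and positivity hypotheses on $(a_1,\dots,a_{2n})$ correspond exactly to $C$ being a set of $n$ distinct elements of $\{1,\dots,2n-1\}$, with no admissible sequence slipping outside the image of the map from Proposition~\ref{prop:nonext}; no new combinatorial input beyond Conjecture~\ref{perfconj} is required.
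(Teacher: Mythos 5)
This statement is a conjecture, and the paper's entire ``proof'' of it is the one-sentence remark that, by Proposition~\ref{prop:nonext}, it is equivalent to Conjecture~\ref{perfconj}; your proposal is exactly that reduction with the bookkeeping (the substitution $c_i = a_{n+i}-2n$, the index change $m = n-k+1$ turning $2nk-k^2$ into $n^2-(m-1)^2$, and the parity match) carried out explicitly and correctly, so you establish the equivalence rather than the conjecture itself, which is all that can be done and is what the paper intends. The one point worth flagging is that your argument tacitly reads the symmetry hypothesis as $a_i + a_{2n+1-i} = 4n$ (equivalently, the sequence has the form $(2n-c_n,\dots,2n-c_1,2n+c_1,\dots,2n+c_n)$), whereas the statement literally says $a_{2n+1-i}-a_i = 2a_i$, i.e.\ $a_{2n+1-i}=3a_i$; the literal condition is incompatible with the image of the map in Proposition~\ref{prop:nonext} and is evidently a misprint, so your reading is the correct one, but your step ``strict monotonicity together with the symmetry forces $c_1<\dots<c_n$ to be positive integers at most $2n-1$'' only goes through under the corrected form.
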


Note that the proof of Proposition~\ref{prop:nonext} hold without modifications for multi Skolem sets and non-binary Toeplitz X-rays.
\begin{cor}
$A = \{a_1, a_2, \dots, a_{n}\}$ is a multi Skolem set if and only if there is a fixed-point-free involution $\pi \in S_{2n}$ with Toeplitz characteristic $l_t(\pi) = (2n-a_n,2n-a_{n-1},\dots,2n-a_1,2n+a_1,2n+a_2,\dots,2n+a_n)$.
\end{cor}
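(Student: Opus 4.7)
The plan is to observe that the proof of Proposition~\ref{prop:nonext} already works verbatim in the multiset setting, and spell out why. I would go through that proof line by line and check that distinctness of the $a_i$ was never actually invoked, and that the definition of the Toeplitz characteristic (in which index $i$ is listed with multiplicity $d_i$) is already tailored to absorb non-binary diagonal X-rays.

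Concretely, I would first reuse the combinatorial reformulation: $A = \{a_1, \dots, a_n\}$ (viewed as a multiset) is a multi Skolem set precisely when there is a partition of $\{1, 2, \dots, 2n\}$ into $n$ ordered pairs $(s_i, t_i)$ with $t_i - s_i = a_i$. Given such a partition, the map $\pi \in S_{2n}$ defined by $\pi(s_i) = t_i$ and $\pi(t_i) = s_i$ is a well-defined fixed-point-free involution; conversely every fixed-point-free involution on $\{1,\dots,2n\}$ splits into $n$ transpositions and hence yields such a partition. This bijection between pair-partitions and fixed-point-free involutions is insensitive to whether the multiset of differences $A$ contains repetitions.

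Next I would verify the Toeplitz characteristic formula. For each pair $(s_i, t_i)$ with $s_i < t_i$, the two entries $[P_\pi]_{s_i, t_i}$ and $[P_\pi]_{t_i, s_i}$ sit on the diagonals with $i - j = -a_i$ and $i - j = a_i$, i.e., on the diagonals indexed by $k = 2n + a_i$ and $k = 2n - a_i$ respectively. Summing over all $n$ pairs and reading off the resulting multiset of indices yields exactly $(2n - a_n, \dots, 2n - a_1, 2n + a_1, \dots, 2n + a_n)$, with the multiplicities of the entries of $A$ faithfully transported to those of the characteristic.

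The only real substantive point is that when two pairs share the same difference $a_i = a_j$, the diagonals $2n \pm a_i$ each carry two nonzero entries of $[P_\pi]$, so the X-ray $d(\pi)$ is no longer binary. I do not regard this as an obstacle: it is precisely the convention built into the definition of $l_t(\pi)$ in Section~2 that each index $k$ is repeated $d_k$ times, so the stated multiset equality goes through unchanged. Consequently the proof reduces to a remark, and no new idea is needed beyond what already appears in Proposition~\ref{prop:nonext}.
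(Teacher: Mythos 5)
Your proposal is correct and follows exactly the route the paper takes: the paper's justification for this corollary is precisely the remark that the proof of Proposition~\ref{prop:nonext} goes through without modification for multisets, with non-binary X-rays absorbed by the multiplicity convention in the definition of $l_t(\pi)$. Your line-by-line verification (including the index computation $k = 2n \pm a_i$ and the observation that repeated differences merely raise $d_k$ above $1$) fills in the details the paper leaves implicit, but introduces no new idea and no deviation from the paper's argument.
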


As in the extremal case, the proof of Proposition~\ref{prop:nonext} give a bijection between (multi) Skolem sequences and fixed-point-free involutions for each pair of corresponding (multi) Skolem sets and Toeplitz characteristics.  
\begin{cor}
The number of (multi) Skolem sequences that can be generated from $A = \{a_1, a_2, \dots, a_{n}\}$ equals the number of fixed-point-free involutions $\pi \in S_{2n}$ with Toeplitz characteristic $l_t(\pi) = (2n-a_n,2n-a_{n-1},\dots,2n-a_1,2n+a_1,2n+a_2,\dots,2n+a_n)$.
\end{cor}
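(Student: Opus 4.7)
The plan is to lift Proposition~\ref{prop:nonext} from the level of sets and Toeplitz characteristics to the level of individual sequences and involutions, by observing that the proof of that proposition is already fully constructive and invertible at the level of partitions of $\{1,\ldots,2n\}$ into labelled pairs. No new combinatorics is needed beyond a careful identification of the underlying data, and the argument is meant to parallel the extremal case corollary above it.

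First I would identify a (multi) Skolem sequence generated from $A=\{a_1,\ldots,a_n\}$ with an ordered partition of $\{1,2,\ldots,2n\}$ into pairs $(s_i,t_i)$ satisfying $t_i-s_i=a_i$ for $i=1,\ldots,n$: reading off the two positions at which the symbol $a_i$ appears in the sequence gives $s_i<t_i$, and conversely such data determines a unique sequence. In the multi Skolem case, repeated values of $a$ in $A$ correspond to several pairs with the same difference; once one fixes a convention for labelling these pairs (for instance, lexicographic order on $(s_i,t_i)$), distinct sequences correspond to distinct partitions.

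Next, I would apply the construction from the proof of Proposition~\ref{prop:nonext}: send the partition $\{(s_i,t_i)\}$ to the fixed-point-free involution $\pi\in S_{2n}$ defined by $\pi(s_i)=t_i$, $\pi(t_i)=s_i$. Each pair contributes exactly two $1$'s to $[P_\pi]$, lying on the diagonals indexed by $2n-a_i$ and $2n+a_i$ (since for an $m\times m$ matrix the diagonal $d_k$ collects entries with $i-j=m-k$, and here $m=2n$). Summing over $i$ shows that the Toeplitz characteristic of $\pi$ is exactly $(2n-a_n,\ldots,2n-a_1,2n+a_1,\ldots,2n+a_n)$. The inverse is equally explicit: given such a $\pi$, read off its $n$ disjoint $2$-cycles to recover the partition and hence the sequence.

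The only mildly delicate point, and the one I would spend most of the effort confirming, is the multi Skolem case, where one must verify that multiplicities in $A$ transfer correctly to multiplicities in the Toeplitz characteristic and back. This is transparent from the diagonal count above: each $2$-cycle with difference $a$ contributes one $1$ to each of the diagonals $2n-a$ and $2n+a$, so $k$ copies of $a$ in $A$ yield $k$ copies of $2n\pm a$ in $l_t(\pi)$, matching the repetitions in the stated characteristic. With this verified, the composition \emph{sequence $\mapsto$ partition $\mapsto$ involution} and its inverse are well-defined and mutually inverse, yielding the claimed equality of counts.
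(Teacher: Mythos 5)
Your proposal is correct and follows exactly the route the paper intends: the paper gives no separate proof of this corollary, merely observing that the construction in the proof of Proposition~\ref{prop:nonext} is already a bijection at the level of individual sequences and involutions, which is precisely what you spell out (including the correct placement of each pair's two entries on diagonals $2n-a_i$ and $2n+a_i$). Your extra care with multiplicities in the multi Skolem case is a welcome elaboration of a detail the paper leaves implicit, but it is not a different argument.
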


We conclude this section with the following problem, which is a modest step towards a proof of Conjecture~\ref{conj:post}.
\begin{prob}
Give a polynomial-time algorithm recognizing binary vectors that are Toeplitz X-rays of permutations.
\end{prob}
Note that the corresponding problem for non-binary vectors is NP-complete. If Conjecture~\ref{conj:post} holds, then such a polynomial algorithm for binary vectors follows directly.

\section{X-rays of doubly stochastic matrices}
A doubly stochastic (abbreviated DS) matrix $D$ is a square matrix, all of which elements are nonnegative real numbers, and
the elements in each column and row sum to $1$. In particular, any permutation matrix is a DS matrix. Diagonal X-rays of DS matrices are defined analogously to the definitions of diagonal X-rays of permutation matrices. Observe that the existence question for DS matrices having diagonal X-ray $(d_1,d_2,\dots,d_{2n-1})$ is in P, since the problem can be expressed as a LP (feasibility) problem. This is in sharp contrast with the corresponding problem for permutation matrices which is
NP-complete~\cite{Nordh09}. 
We refrain from discussing non-binary diagonal X-rays of DS matrices and diagonal X-rays of symmetric DS matrices, and focus on the existence question for binary diagonal X-rays of DS matrices.

Conjecture~\ref{extconj} regarding extremal Skolem sets and the equivalent Conjecture~\ref{conj:post} regarding binary diagonal X-rays of permutation matrices, corresponds 
to the following conjecture about binary diagonal X-rays of DS matrices. 
\begin{conj}
There is a DS matrix $D$ with binary diagonal X-ray $d(D) = (d_1,d_2,\dots,d_{2n-1})$ and corresponding Toeplitz characterstic $l_t(D) = (a_1,a_2,\dots,a_n)$ if and only if $\sum^k_{i=1} a_i \geq k^2$ for each $1 \leq k \leq n$ with equality holding for $k=n$.
\label{conj:stoc}
\end{conj}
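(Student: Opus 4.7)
The plan is to prove necessity by a direct row/column-sum bound and to approach sufficiency by LP duality (Farkas' lemma); sufficiency is the real obstacle.

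\textbf{Necessity.} Let $D$ be a DS matrix with binary diagonal X-ray supported on $\{a_1<\cdots<a_n\}$. Computing $\sum_k k\,d_k$ in two ways gives $\sum_k k\,d_k = \sum_{i,j}(n-(i-j))D_{i,j} = n\cdot n - 0 = n^2$ (the second sum vanishes because row and column sums are all $1$), so $\sum_i a_i = n^2$. For the inequalities, let $T_k=\{(i,j):i-j\ge n-a_k\}$, which contains exactly the mass on the $k$ smallest-indexed nonzero diagonals, totalling $k$. Writing $\sum_{T_k}(i-j)D_{i,j}$ diagonal-by-diagonal gives $nk-\sum_{i\le k}a_i$. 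Writing the same sum as $\sum_i i\,r_i-\sum_j j\,c_j$ with $r_i,c_j\in[0,1]$ the row and column masses inside $T_k$ (each summing to $k$), the first term is at most $k(2n-k+1)/2$ (concentrate $r_i=1$ on the bottom $k$ rows; feasible since $a_k\ge k$) and the second is at least $k(k+1)/2$ (concentrate $c_j=1$ on the leftmost $k$ columns), so the difference is at most $k(n-k)$. Combining gives $nk-\sum_{i\le k}a_i\le k(n-k)$, i.e.\ $\sum_{i\le k}a_i\ge k^2$.

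\textbf{Sufficiency.} Formulate the existence as the LP of finding $D_{i,j}\ge 0$ (supported on cells whose diagonal index lies in $\{a_l\}$) with all row, column, and prescribed diagonal sums equal to $1$. By Farkas' lemma, feasibility is equivalent to
\[
\sum_i\alpha_i+\sum_j\beta_j\;\le\;\sum_{l=1}^{n}\max_{(i,j)\in\mathrm{diag}(a_l)}(\alpha_i+\beta_j)
\qquad\text{for all }\alpha,\beta\in\mathbb{R}^n.
\]
By translation- and scale-invariance and by piecewise linearity it suffices to verify this for step-function pairs $\alpha_i=\mathbf{1}[i\ge i_0]$, $\beta_j=\mathbf{1}[j\le j_0]$, for which the right-hand side evaluates to $N_A+N_B$ with $s_1=n-i_0+1$, $s_2=j_0$, $A=s_1+s_2-1$, $B=n+\max(s_1,s_2)-1$, and $N_t=|\{l:a_l\le t\}|$. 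The required inequality becomes $N_A+N_B\ge s_1+s_2$.

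\textbf{The main obstacle.} The hypothesis readily yields $N_B\ge\max(s_1,s_2)$ (via the consequence $a_t\le n+t-1$), but $N_A$ can be smaller than $\min(s_1,s_2)$, so the proof must exploit $N_A$ and $N_B$ jointly rather than bounding each separately. The hypothesis also yields a one-parameter family of lower bounds of the form $N_t\ge\lceil(2t-n+1)/3\rceil$ which, together with $N_B\ge s_2$ and careful integer arithmetic, should suffice; but the case analysis is intricate (in particular the split $i_0+j_0\le n+1$ versus $i_0+j_0>n+1$, which interchanges the two corner thresholds). An attractive alternative is a direct construction using the equivalent majorization statement $(1,3,5,\ldots,2n-1)\succ(a_1,\ldots,a_n)$: starting from the anti-diagonal permutation matrix (which realises the extremal X-ray $(1,0,1,0,\ldots,1)$), one would seek a sequence of Birkhoff-type transfers that preserve the DS property while redistributing which diagonals carry mass, guided by the majorization.
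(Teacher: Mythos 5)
First, a point of framing: the statement you were given is labelled a \emph{conjecture} in the paper, and the paper does not prove it. The only part the author establishes is the necessity of the density conditions, and that is done in a footnote by invoking Birkhoff's theorem to write $D = c_1P_1+\cdots+c_mP_m$ with $\sum_j c_j=1$ and then averaging the (cited) inequalities $\sum_{i=1}^k a_i^j\ge k^2$, with equality at $k=n$, satisfied by the Toeplitz characteristics of the permutation matrices $P_j$. Your necessity argument is correct and takes a genuinely different, self-contained route: the identity $\sum_k k\,d_k=\sum_{i,j}(n-(i-j))D_{i,j}=n^2$ gives $\sum_i a_i=n^2$, and the double count of $\sum_{(i,j)\in T_k}(i-j)D_{i,j}$ over the corner region $T_k=\{(i,j): i-j\ge n-a_k\}$ (which carries total mass exactly $k$ because the X-ray is binary) --- once diagonal-by-diagonal giving $nk-\sum_{i\le k}a_i$, once via the restricted row and column masses $r_i,c_j\in[0,1]$ with $\sum r_i=\sum c_j=k$ giving an upper bound of $k(n-k)$ --- yields $\sum_{i\le k}a_i\ge k^2$ directly. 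I have checked the arithmetic and it is sound. What this buys over the paper's footnote is that it avoids both Birkhoff's theorem and the appeal to the (separately proved, cited) permutation case; it is an honest first-principles proof of the ``only if'' half.

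The ``if'' direction is where the entire content of the conjecture lies, and your sketch does not establish it, as you yourself acknowledge. Two gaps are concrete. (i) The reduction of the Farkas condition to step-function potentials $\alpha_i=\mathbf{1}[i\ge i_0]$, $\beta_j=\mathbf{1}[j\le j_0]$ is asserted from ``piecewise linearity'' but never justified. The underlying constraint system has three families of line sums (rows, columns, and the prescribed diagonals); each cell lies in one line of each family, and such systems are not totally unimodular in general, so there is no a priori reason the relevant dual extreme rays normalize to $\{0,1\}$-valued potentials. Indeed, after eliminating the diagonal multipliers optimally, the Farkas condition is exactly the statement $(\mathbf{1},\mathbf{1})\in\sum_l \operatorname{conv}\{e_i+e_j' : (i,j)\in\mathrm{diag}(a_l)\}$, i.e.\ a restatement of primal feasibility; deciding which test vectors $(\alpha,\beta)$ suffice is the whole problem, not a routine normalization. (ii) Even granting the reduction, the target inequality $N_A+N_B\ge s_1+s_2$ is left at ``should suffice'' with an admittedly unfinished case analysis, and the alternative majorization/transfer construction is only a proposal. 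So what you have is a clean new proof of the known, easy half of an open conjecture, plus a plausible but unproven attack on the open half; it should not be presented as a proof of the statement.
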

To see that the density conditions in Conjecture~\ref{conj:stoc} are necessary, recall the following theorem due to Birkhoff~\cite{Birkhoff46}.
\begin{thm}[\cite{Birkhoff46}]
A nonnegative real matrix $D$ is doubly stochastic if and only if there exists permutation
matrices $P_1,\dots,P_m$ and positive real numbers $c_1,\dots,c_m$ summing to $1$ such that
$D = c_1P_1 + \cdots + c_mP_m$. 
\end{thm}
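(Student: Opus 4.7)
My plan is to prove the Birkhoff--von Neumann theorem in two directions. For the \emph{if} direction, if $D = c_1 P_1 + \cdots + c_m P_m$ with $c_i > 0$ and $\sum_i c_i = 1$, then $D$ has nonnegative entries, and each row sum of $D$ equals $\sum_i c_i \cdot 1 = 1$ since each $P_i$ has row sums equal to $1$; the column sums are $1$ by the same calculation. This direction is essentially immediate.

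For the harder \emph{only if} direction, the key lemma is that every doubly stochastic matrix $D$ has a permutation $\sigma \in S_n$ with $D_{i,\sigma(i)} > 0$ for all $i$, i.e., a permutation matrix whose support is contained in the support of $D$. I would prove this by Hall's marriage theorem applied to the bipartite graph $G$ with row-vertices $\{r_1,\dots,r_n\}$ and column-vertices $\{c_1,\dots,c_n\}$, where $r_i c_j$ is an edge iff $D_{i,j} > 0$. To verify Hall's condition, take any set $S$ of row-vertices. The total mass in those rows equals $|S|$, and this mass lies entirely in the columns indexed by $N(S)$. Since each column of $D$ sums to exactly $1$, the mass in the columns of $N(S)$ is at most $|N(S)|$, so $|N(S)| \geq |S|$, yielding the required perfect matching and hence the permutation $\sigma$.

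Given this lemma, I would finish by induction on the number of nonzero entries of $D$. If $D$ is itself a permutation matrix we take $m=1$ and are done. Otherwise, let $P$ be the permutation matrix obtained from $\sigma$, and set $c = \min_i D_{i,\sigma(i)}$. Then $0 < c < 1$ (if $c=1$ held, all $D_{i,\sigma(i)}$ would equal $1$ and double stochasticity would force $D=P$, contradicting our case assumption). The matrix $D' := (D - cP)/(1-c)$ is nonnegative, doubly stochastic, and has strictly fewer nonzero entries than $D$ (the minimizing position becomes zero). By induction $D' = \sum_k c'_k P_k$, and then $D = cP + (1-c)\sum_k c'_k P_k$ is the desired convex combination.

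The main obstacle is the Hall-condition verification for the support lemma: this is the one place where double stochasticity is used in an essential way, and the balance between row mass and column capacity must be argued carefully. Once the lemma is available, the induction is routine, since each reduction strictly decreases the number of nonzero entries and the process terminates in at most $n^2$ steps.
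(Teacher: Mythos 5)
This theorem is not proved in the paper at all: it is Birkhoff's classical theorem, imported with a citation to \cite{Birkhoff46} and used only to justify the necessity of the density conditions in Conjecture~\ref{conj:stoc}. So there is no in-paper argument to compare against. Your proof is the standard Birkhoff--von Neumann argument and it is correct and complete: the \emph{if} direction is the immediate linearity computation; the support lemma is correctly established via Hall's condition (the mass of the rows in $S$ is $|S|$, it is confined to the columns of $N(S)$, and each such column can absorb at most $1$, which is exactly where double stochasticity enters); and the induction on the number of nonzero entries is sound, since $0<c<1$ in the non-permutation case, $D'=(D-cP)/(1-c)$ is again doubly stochastic, the minimizing position of the support strictly drops out, and the resulting coefficients $c$ and $(1-c)c_k'$ are positive and sum to $1$. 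The only cosmetic remark is that the base case of the induction is the observation that a doubly stochastic matrix with exactly $n$ nonzero entries is a permutation matrix, which you handle implicitly; nothing is missing.
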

The necessity of the density conditions in Conjecture~\ref{conj:stoc} hence follows\footnote{By Birkhoff's theorem,
$D = c_1P_1 + \cdots + c_mP_m$ where $P_1,\dots,P_m$ are permutation matrices and $c_1 + \cdots + c_m = 1$.
By the necessary density conditions for diagonal X-rays of permutations (see~\cite{Postnikov,BF14,Nordh08}), 
$\sum^k_{i=1} a^j_i \geq k^2$ for each $1 \leq k \leq n$ with equality holding for $k=n$, where $l_t(P_j) = (a^j_1,\dots, a^j_n)$ for each 
permutation $P_j$ ($1 \leq j \leq m)$. 
Observe that $\sum^k_{i=1} a_i = \sum_{j=1}^m (c_j \sum^k_{i=1} a^j_i)$ for all $1 \leq k \leq n$, where
$l_t(D) = (a_1,\dots,a_n)$ is the Toeplitz characteristic of $D$.
Note that $\sum_{j=1}^m (c_j \sum^k_{i=1} a^j_i) \geq k^2$ for all $1 \leq k \leq n$,
since $\sum^k_{i=1} a^j_i \geq k^2$ for all $1 \leq k \leq n$ and the
$c_j$'s add to $1$. The fact that $\sum^n_{i=1} a_i = n^2$ follows from the fact that $\sum^n_{i=1} a^j_i = n^2$ for each of the $m$ permutation matrices.} from the necessity of the density conditions for the Toeplitz characteristic $l_t(\pi) = (a_1,a_2,\dots,a_n)$ of $\pi \in S_n$.

We conjecture that if there is a DS matrix with binary diagonal X-ray $(d_1,d_2,\dots,d_{2n-1})$ then there is a permutation matrix with the same diagonal X-ray.
\begin{conj}
There is a DS matrix $D$ of order $n$ with binary diagonal X-ray $d(D) = (d_1,d_2,\dots,d_{2n-1})$ if and only if there is a permutation $\pi \in S_n$ with binary diagonal X-ray $d(\pi) = (d_1,d_2,\dots,d_{2n-1})$.
\label{conj:stoctoperm}
\end{conj}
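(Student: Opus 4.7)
For the forward direction, every permutation matrix is doubly stochastic, so $[P_\pi]$ itself witnesses the DS side. For the reverse direction, suppose $D$ is a DS matrix with binary diagonal X-ray $(d_1,\dots,d_{2n-1})$ supported on $S = \{a_1 < a_2 < \dots < a_n\}$, so $d_k = 1$ iff $k \in S$. My first, conditional, strategy is to invoke the necessity argument already given in the footnote to Conjecture~\ref{conj:stoc}, which establishes $\sum_{i=1}^k a_i \geq k^2$ for $1 \leq k \leq n$ with equality at $k = n$. By the reformulation of Conjecture~\ref{conj:post} stated in its footnote, these density conditions are precisely the conjectured characterization of Toeplitz characteristics of permutations in $S_n$. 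Hence Conjecture~\ref{conj:stoctoperm} follows immediately as a corollary of Conjecture~\ref{conj:post} (equivalently, Conjecture~\ref{extconj}).

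My second, direct, strategy attempts an unconditional proof via Birkhoff's theorem. Write $D = \sum_{j=1}^m c_j P_j$ with positive weights $c_j$ summing to one and permutations $P_j$, each necessarily supported on the $S$-diagonals. Then the integers $d_k(P_j) \geq 0$ satisfy $\sum_{k \in S} d_k(P_j) = n$ and $\sum_j c_j d_k(P_j) = 1$ for each $k \in S$. By pigeonhole, each $P_j$ is either \emph{rainbow}, meaning it uses every diagonal of $S$ exactly once and thus has binary diagonal X-ray equal to $d$, or it places at least two entries on some diagonal of $S$ and none on another. The problem reduces to showing that the polytope $\Pi_S$ of DS matrices supported on the $S$-diagonals with $d_k = 1$ for all $k \in S$ always contains a rainbow permutation whenever it is nonempty. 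Direct computation for $n \leq 4$ suggests, more strongly, that $\Pi_S$ is always integral, with vertices being rainbow permutations. A natural attack is an exchange argument: given a decomposition in which no $P_j$ is rainbow, select $P_j, P_{j'}$ whose supports together cover every diagonal of $S$ and swap along an alternating cycle in the symmetric difference $P_j \triangle P_{j'}$ to strictly decrease the weighted mass on non-rainbow permutations, inductively producing a rainbow member of the decomposition.

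The principal difficulty is visible in either strategy. The conditional route is only as strong as Conjecture~\ref{conj:post}, which remains open. The direct route would require a combinatorial integrality statement for a rainbow-matching polytope whose color classes are diagonals of the matrix; the analogous statement fails for general bipartite graphs with colored edges, since even a $2 \times 2$ bipartite graph with two two-edge color classes admits the all-$1/2$ matrix as the unique fractional rainbow matching and has no integer one. Any direct proof must therefore genuinely exploit the specific geometry of diagonals in an $n \times n$ grid --- in particular, that each $E_k$ is itself a matching of the bipartite (row, column) graph, and that the $E_k$ intersect the row and column structure in a very restricted way. Isolating the correct structural feature is where I expect the real work to lie, and it would not be surprising if doing so is essentially equivalent to a direct proof of Conjecture~\ref{conj:post} itself.
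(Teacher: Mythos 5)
The statement you were given is presented in the paper as an open \emph{conjecture} (Conjecture~\ref{conj:stoctoperm}); the paper offers no proof of it, so there is nothing to compare your argument against --- and, as you yourself acknowledge, your proposal does not close the gap either. Your first strategy is logically sound but conditional: the footnote to Conjecture~\ref{conj:stoc} does establish the density conditions $\sum_{i=1}^k a_i \geq k^2$ (with equality at $k=n$) for the support of a binary diagonal X-ray of a DS matrix, and the sufficiency half of the reformulated Conjecture~\ref{conj:post} would then produce the required permutation. But Conjecture~\ref{conj:post} is precisely the open statement this whole circle of ideas is aimed at; the paper's closing remark is that Conjectures~\ref{conj:stoc} and~\ref{conj:stoctoperm} \emph{together imply} Conjecture~\ref{conj:post}, so deriving Conjecture~\ref{conj:stoctoperm} \emph{from} Conjecture~\ref{conj:post} runs the intended implication backwards and yields nothing new beyond the (mildly useful) observation that, modulo the known necessity argument, Conjecture~\ref{conj:post} implies Conjecture~\ref{conj:stoctoperm}.

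Your second strategy correctly sets up the problem via Birkhoff's theorem --- each $P_j$ in the decomposition is supported on the $S$-diagonals, and the pigeonhole dichotomy between rainbow and non-rainbow permutations is right --- but the reduction to showing that the polytope $\Pi_S$ contains a rainbow permutation whenever it is nonempty is exactly the hard content of the conjecture restated in polyhedral language, not a step toward proving it. The proposed exchange argument along alternating cycles in $P_j \triangle P_{j'}$ is not developed to the point where one can check it: swapping along such a cycle changes which diagonals are occupied in an uncontrolled way, and you give no invariant that strictly decreases, nor any reason the specific geometry of diagonals (as opposed to arbitrary matchings used as color classes, where you correctly note the analogous statement fails) rescues the induction. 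In short, both routes terminate at an honestly identified but genuinely open difficulty, so the statement remains, as in the paper, a conjecture.
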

Note that the two conjectures above (Conjectures~\ref{conj:stoc} and \ref{conj:stoctoperm}) together 
imply Conjecture~\ref{conj:post} (and the equivalent Conjecture~\ref{extconj}).
\bibliographystyle{plain}
\bibliography{references}

\begin{thebibliography}{1}

\bibitem{Postnikov}
C.~Bebeacua, T.~Mansour, A.~Postnikov, and S.~Severini.
\newblock On the {X}-rays of permutations.
\newblock {\em Electr. Notes in Discrete Math.}, 20:193--203, 2005.

\bibitem{Birkhoff46}
G.~Birkhoff.
\newblock Tres observaciones sobre el algabra lineal.
\newblock {\em Univ. Nac. Tucumán Rev.}, Ser A:147--151, 1946.

\bibitem{BF14}
R.A Brualdi and E.~Fritscher.
\newblock Hankel and {T}oeplitz {X}-rays of permutations.
\newblock {\em Linear Algebra and its Applications}, 449:350--380, 2014.

\bibitem{EXJ}
G.~Nordh.
\newblock Generalization of {S}kolem sequences, {LITH-MAT-EX-}2003/05.
\newblock Master's thesis, Department of Mathematics, Linköpings universitet,
  2003.

\bibitem{Nordh08}
G.~Nordh.
\newblock Perfect {S}kolem sets.
\newblock {\em Discrete Math.}, 308(9):1653--1664, 2008.

\bibitem{Nordh09}
G.~Nordh.
\newblock A note on the hardness of {S}kolem-type sequences.
\newblock {\em Discrete Applied Mathematics}, 158(8):964--966, 2010.

\bibitem{HCD}
N.~Shalaby.
\newblock {\em Handbook of Combinatorial Designs, Second Edition}, chapter
  Skolem and Langford Sequences, pages 612--617.
\newblock CRC Press, 2006.

\bibitem{Skol}
T.~Skolem.
\newblock On certain distributions of integers in pairs with given differences.
\newblock {\em Math. Scand.}, 5:57--68, 1957.

\end{thebibliography}

\end{document}